\newtheorem{thm}{Theorem}
\newtheorem{Defn}{Definition}
\newtheorem{Remark}{Remark}
\newtheorem{Note}{Note}
\newtheorem{Example}{Example}
\newtheorem{Examples}{Examples}
\newtheorem{Problems}{Problems}
\newtheorem{Problem}{Problem}
\newtheorem{Number}{\!\!}
\newenvironment{proof}{{\noindent\bf Proof.}}%
                  {\nopagebreak\hspace*{\fill}$\Box$\medskip\medskip\par}   
\newcommand{\wb}{\overline}
\newcommand{\mto}{\mapsto}
\newcommand{\R}{{\mathbb R}}
\newcommand{\cN}{{\mathcal N}}
\newcommand{\cL}{{\mathcal L}}
\newcommand{\cM}{{\mathcal M}}
\newcommand{\sub}{\subseteq}
\begin{document}
\begin{center}
{\Large \bf Simplified Proofs for the
Pro-Lie Group Theorem\\[2mm]
and the One-Parameter Subgroup Lifting
Lemma}\\[6mm]
{\bf Helge Gl\"{o}ckner}\vspace{5mm}
\end{center}
\begin{abstract}
This note is devoted to the theory of
projective limits of finite-dimensional
Lie groups, as
developed in the recent monograph
[Hofmann, K.\,H. and S.\,A. Morris,
``The Lie Theory of Connected Pro-Lie Groups,''
EMS Publ.\ House, 2007].
We replace the original, highly non-trivial
proof of the One-Parameter Subgroup
Lifting Lemma given in the monograph
by a shorter and more elementary argument.
Furthermore, we shorten (and correct)
the proof of the so-called
Pro-Lie Group Theorem, which asserts
that pro-Lie groups and
projective limits of Lie groups coincide.\vspace{2mm}
\end{abstract}
\noindent
By a famous theorem of Yamabe \cite{Yam},
every identity neighbourhood of a connected (or almost connected)
locally compact group $G$ contains a
closed normal subgroup $N$ such that $G/N$ is a Lie group,
and thus is a so-called pro-Lie group.
Therefore locally compact pro-Lie groups
form a large class of locally compact groups,
which has been studied by many authors
(see, e.g., \cite{Iwa}, \cite{Las},
\cite{MaZ} as well as
\cite{HMS} and the references therein).
Although a small number of papers
broached on the topic of non-locally compact pro-Lie groups
(like \cite{Hof} and \cite{Glo}),
a profound structure theory of such groups was only
begun recently in \cite{HMo}
and then fully worked out in
the monograph \cite{HaM}.
The novel results accomplished in \cite{HaM}
make it clear that the study of general pro-Lie groups
is fruitful also for the theory of locally
compact groups.\\[2.5mm]
We recall from \cite{HaM}:
For $G$ a Hausdorff topological group,
$\cN(G)$ denotes the set of all closed
normal subgroups $N$ of $G$ such that $G/N$ is a (finite-dimensional)
Lie group. If $G$ is complete and $\cN(G)$ is a filter basis
which converges to $1$, then $G$ is called
a \emph{pro-Lie group}.
It is easy to see that every pro-Lie group is,
in particular, a projective limit of Lie groups.
Various results which are known in the locally
compact case become much more complicated
to prove for non-locally compact pro-Lie groups.
For example, it is not too hard to see
that
every locally compact group which is a projective
limit of Lie groups is a pro-Lie group
(see
\cite{App} for an elementary argument; the appeal to
the solution of Hilbert's fifth problem
in the earlier proof in \cite{HMS} is unnecessary).
Also, it has been known for a long time \cite{HWY}
that one-parameter subgroups
can be lifted over quotient morphisms $q\colon G\to H$
between
locally compact groups, i.e.,
for each continuous homomorphism $X\colon \R\to H$
there exists a continuous homomorphism
$Y\colon \R\to G$ such that $X=q\circ Y$.
The original proofs for analogues of
the preceding two
results for general pro-Lie groups
as given in \cite{HMo} and \cite{HaM}
(called the ``Pro-Lie Group Theorem'' and ``One-Parameter
Subgroup Lifting Lemma'' there)
were quite long and complicated.
Later, A.\,A. George Michael
gave a short alternative proof of the
Pro-Lie Group Theorem,
which however was not self-contained
but depended on a non-elementary result
from outside, the
Gleason--Palais Theorem:
\emph{If $G$ is a locally arcwise connected topological
group in which the compact metrizable
subsets are of bounded dimension,
then $G$ is a Lie group} \cite[Theorem~7.2]{GaP}.\\[2.5mm]
The goal of this note is to record
two short and simple arguments, which together
with some
10 pages of external reading\footnote{Lemmas 3.20--3.24, Propositions 3.27
and 3.30,
Lemma 3.31 and Lemmas 4.16--4.18 in \cite{HaM}.}
provide elementary and essentially self-contained
proofs for both the Pro-Lie Group
Theorem and the One-Parameter Subgroup
Lifting Lemma
(up to well-known facts).
In this way,
the proof of the latter shrinks from
over 3 pages to 8 lines,
and the proof of the former by 6 pages.
Moreover, the author noticed that the proof of the
Pro-Lie Group Theorem in \cite{HaM} (and \cite{HMo})
depends on an incorrect assertion,\footnote{Parts (iii) and (iv) of the
``Closed Subgroup Theorem''
\cite[Theorem 1.34]{HaM} are false, as the example $G={\mathbb R}$,
$H={\mathbb Z}$, ${\mathcal N}=
\{\{0\},\sqrt{2} \, {\mathbb Z}\}$ shows.
This invalidates the proof of part (iii) of the ``First Fundamental
Lemma'' \cite[Lemma 3.29]{HaM}, which is used
in \cite{HaM} to prove the Pro-Lie Group Theorem
(the proof of Lemma 3.29 (iv) also seems to be defective,
because elements $M\in \cM$
are of the form $M=\ker(f_j)\cap G_0$,
rather than $M=\ker(f_j)$).}
making it the more important to
have a correct elementary proof
available.\\[2.5mm]
Let us now re-state and prove
the theorem and lemma in contention.
Notations from \cite{HaM}
will be used without explanation.
\begin{thm}[The Pro-Lie Group Theorem]
Every projective limit of Lie groups
is a pro-Lie group.
\end{thm}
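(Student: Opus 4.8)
The plan is to verify the two defining properties of a pro-Lie group directly for $G=\pl G_i$, where $(G_i,p_{ij})_{i\le j}$ is a projective system of Lie groups over a directed index set $I$. I realise $G$ as the subgroup of $P=\prod_{i}G_i$ consisting of all coherent families $(g_i)$ with $p_{ij}(g_j)=g_i$ for $i\le j$, and write $\pr_i\colon G\to G_i$ for the coordinate projections. First I would dispose of completeness: $P$ is complete as a product of complete (because locally compact) Lie groups, and $G$ is an intersection of sets of the form $\{(g_k):p_{ij}(g_j)=g_i\}$, each closed since $G_i$ is Hausdorff; thus $G$ is closed in $P$, hence complete. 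Next I would record that the kernels $K_i:=\ker\pr_i$ are closed normal subgroups with $K_j\sub K_i$ whenever $i\le j$, so that, $I$ being directed, $\{K_i\}_{i\in I}$ is simultaneously a filter basis and a basis of identity neighbourhoods of $G$.

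Everything then reduces to the single claim
$(\star)$: if $N\normt G$ is closed and $G/N$ admits a continuous injective homomorphism into some Lie group, then $G/N$ is a Lie group, i.e.\ $N\in\cN(G)$. Granting $(\star)$, convergence of $\cN(G)$ to $1$ is immediate: the factorisation of $\pr_i$ gives a continuous injective homomorphism $G/K_i\to G_i$, so $K_i\in\cN(G)$, and as the $K_i$ form a neighbourhood basis at $1$, every identity neighbourhood contains a member of $\cN(G)$. The filter-basis property follows as well: for $N_1,N_2\in\cN(G)$ the diagonal yields a continuous injective homomorphism $G/(N_1\cap N_2)\emb G/N_1\times G/N_2$ into a Lie group, so $(\star)$ gives $N_1\cap N_2\in\cN(G)$. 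It is essential here not to argue that this image is closed and then invoke Cartan's theorem: for $G=\R$, $N_1=\Z$, $N_2=\sqrt2\,\Z$ one has $N_1\cap N_2=\{0\}$, and the diagonal image of $G/(N_1\cap N_2)=\R$ is the dense one-parameter line in $\T^2$, which is precisely the configuration that invalidates the monograph's original argument.

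The whole weight of the theorem thus rests on establishing $(\star)$, and this is the step I expect to be the main obstacle. The naive identification of $G/N$ with its image in the Lie group fails because that image need not be closed and the quotient topology is strictly finer than the subspace topology. What one has for free is that $G/N$ has no small subgroups: a small subgroup would map, under the injective homomorphism, into a no-small-subgroups neighbourhood of the target and hence be trivial, so it is already trivial by injectivity. To upgrade ``no small subgroups together with a faithful finite-dimensional representation'' to ``$G/N$ is a Lie group'' without appealing to the Gleason--Palais theorem, I would exploit the completeness of $G$ together with the one-parameter-subgroup and exponential machinery of Lemmas 4.16--4.18: one lifts the one-parameter subgroups of the target along the quotient, uses completeness to control their limits, and assembles them into an exponential chart exhibiting a finite-dimensional Lie group structure on $G/N$. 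Carrying out this promotion cleanly, and checking that the resulting chart is compatible with the quotient topology, is where I anticipate the real work to lie.
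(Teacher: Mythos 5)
Your preparatory steps are sound: completeness of $G$ as a closed subgroup of the product, the kernels $K_i=\ker(\pr_i)$ forming a filter basis that converges to $1$ (though note the $K_i$ are not in general \emph{neighbourhoods} of $1$, only cofinal in the neighbourhood filter in the sense of convergence), and the reduction of everything to your claim $(\star)$. You also correctly diagnose the trap that invalidated the monograph's original argument: $f_j(G)$ need not be closed and the quotient topology is in general strictly finer than the induced one. But the proposal stops exactly where the theorem begins: $(\star)$ is never proved, and the sketch offered for it cannot work as stated. Having no small subgroups together with a faithful continuous homomorphism into a Lie group is far from sufficient for a topological group to be a Lie group ($\Q$ with its usual topology has both properties), so any proof of $(\star)$ must exploit the projective-limit structure of $G$ in an essential way --- and your sketch never does. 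Completeness of $G$ does not pass to $G/N$ (quotients of complete groups need not be complete), and Lemmas 4.16--4.18 of [HaM] are the wrong tool: they concern lifting one-parameter subgroups over quotient morphisms \emph{between pro-Lie groups}, which is circular here (that $G$ is a pro-Lie group is precisely what is being proved) and is the machinery this note uses for the \emph{other} theorem, not this one. Indeed, $(\star)$ in your generality (arbitrary closed normal $N$) is essentially stronger than the theorem itself: granting the theorem, it still needs the Quotient Theorem [HaM, Theorem 4.1] to know that $G/N$ is a proto-Lie group before the no-small-subgroups observation can finish; so reducing to $(\star)$ inverts the logical order.

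The actual proof closes this gap by a construction of a different nature, and only for the kernels $N=\ker(f_j)$ that are really needed. One does not inject $G/\ker(f_j)$ into $G_j$; instead one builds a \emph{finer} Lie group topology on the image $Q_j=f_j(G)$. Let $H_j$ be the analytic subgroup of $G_j$ with Lie algebra $\cL(f_j)(\cL(G))$, equipped with its intrinsic Lie group topology; by [HaM, Lemmas 3.23 and 3.24], $f_j$ restricts and corestricts to an \emph{open} quotient morphism $\phi_j\colon G_0\to H_j$, and conjugation-invariance of $H_j$ under $I_{f_j(g)}^{G_j}$ allows one to make $Q_j$ a Lie group with $H_j$ as an open subgroup. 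The crux --- continuity of the corestriction $q_j\colon G\to Q_j$ with respect to this finer topology --- is where the projective system enters: by [HaM, Lemma 3.21] there exists $k\geq j$ with $f_{jk}((G_k)_0)\sub H_j$, and the map $\wb{f}_{jk}\colon (G_k)_0\to H_j$ is continuous, so $q_j$ agrees with the continuous map $\wb{f}_{jk}\circ f_k$ on the identity neighbourhood $f_k^{-1}((G_k)_0)$ and hence is continuous. Thus $q_j$ is a continuous open surjection, i.e.\ a quotient morphism, so $G/\ker(f_j)\isom Q_j$ is a Lie group, and [HaM, Proposition 3.27] (which subsumes your filter-basis and convergence checks) completes the proof. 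Nothing in your outline anticipates this construction, and without it --- or a genuine substitute --- the central claim remains unproven.
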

\begin{proof}
Let $G$ be a projective limit of
a projective system $((G_j)_{j\in J},(f_{jk})_{j\leq k})$
of Lie groups $G_j$ and morphisms $f_{jk}\colon G_k\to G_j$.
By \cite[Proposition 3.27]{HaM}, $G$
will be a pro-Lie group if we can show that
$G/\ker(f_j)$
is a Lie group for each limit map
$f_j\colon G\to G_j$. Let $H_j$ be the analytic subgroup
of $G_j$ with Lie algebra $\cL(f_j)(\cL(G))$
(equipped with its Lie group topology).
By \cite[Lemmas 3.23 and 3.24]{HaM}, $f_j$ restricts
and corestricts to a quotient morphism $\phi_j\colon G_0\to H_j$.
Given $g\in G$, write $I_g^G\colon G\to G$, $I_g^G(h):=ghg^{-1}$.
Since $\phi_j\circ I_g^G|_{G_0}=I_{f_j(g)}^{G_j}\circ \phi_j$,
we see that $I_{f_j(g)}^{G_j}(H_j)\subseteq H_j$ and
$I_{f_j(g)}^{G_j}|_{H_j}\colon H_j\to H_j$ is continuous.
Hence $Q_j:=f_j(G)$ can be made a Lie group with $H_j$
as an open subgroup. Then the corestriction $q_j\colon G\to Q_j$
of $f_j$ to $Q_j$ is a surjective homomorphism, which is open
since so is $f_j|_{G_0}^{H_j}=\phi_j$. If we can show that $q_j$
is continuous, then $q_j$ will be a quotient morphism
and thus $G/\ker(f_j)\cong Q_j$ a Lie group.
However, by \cite[Lemma 3.21]{HaM}, there exists some
$k\in I$ such that $k\geq j$ and $f_{jk}((G_k)_0)\sub
H_j$. Also, it is shown in the proof
of \cite[Lemma 3.24]{HaM} that 
the map
$\wb{f}_{jk}\colon (G_k)_0\to H_j$, $x\mto f_{jk}(x)$
is continuous. Since $U:=f_k^{-1}((G_k)_0)$
is a neighbourhood \mbox{of $1$} in $G$
and $q_j|_U\!=\!\wb{f}_{jk}\!\circ\! f_k|_U^{(G_k)_0}$
is continuous, the homomorphism $q_j$ is
continuous.
\end{proof}
\begin{thm}[The One-Parameter Subgroup Lifting Lemma]
Let $G$ and $H$ be pro-Lie groups and
$f\colon G\to H$ be a quotient morphism
of topological groups.
Then every one-parameter subgroup $X$
of $H$ lifts to one of $G$, i.e., 
there exists a one-parameter subgroup
$Y\colon \R\to G$ such that $X=f \circ Y$.
\end{thm}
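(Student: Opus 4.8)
The plan is to reduce the statement to the surjectivity of the induced Lie algebra map $\cL(f)\colon \cL(G)\to\cL(H)$. Since a one-parameter subgroup is by definition a continuous homomorphism $\R\to(\cdot)$, we have $\cL(G)=\Hom(\R,G)$ and $\cL(H)=\Hom(\R,H)$, and $\cL(f)$ is nothing but the map $Y\mapsto f\circ Y$. Thus producing a $Y$ with $f\circ Y=X$ is literally the same as producing a preimage of $X$ under $\cL(f)$, so the theorem is equivalent to the assertion that $\cL(f)$ is surjective.

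To exploit the pro-Lie structure I would write $\cL(G)=\varprojlim_{N\in\cN(G)}\cL(G/N)$ and $\cL(H)=\varprojlim_{M\in\cN(H)}\cL(H/M)$ as projective limits of the Lie algebras of the finite-dimensional Lie quotients (Lemmas 4.16--4.18). Two features make the finite-dimensional pieces tractable. First, for $N\subseteq N'$ in $\cN(G)$ the quotient morphism $G/N\to G/N'$ is a submersion, so the bonding map $\cL(G/N)\to\cL(G/N')$ is a surjection of finite-dimensional vector spaces, and likewise for $H$. Second, since $f$ is a quotient morphism, for each $M\in\cN(H)$ the composite $G\to H\to H/M$ is again a quotient morphism, with kernel $f^{-1}(M)$; hence $f^{-1}(M)\in\cN(G)$ and $G/f^{-1}(M)\cong H/M$, so on this distinguished subfamily the comparison maps $\cL(G/f^{-1}(M))\to\cL(H/M)$ are isomorphisms.

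Given $X=(X_M)_M\in\cL(H)$, these isomorphisms turn $X$ into a compatible family prescribing the value of the desired $Y$ on each $f^{-1}(M)$. The remaining task is to extend this prescription to a full compatible family $(Y_N)_{N\in\cN(G)}$, i.e.\ to choose $Y_N\in\cL(G/N)$ for every $N$ with the correct image in $\cL(H/M)$ whenever $N\subseteq f^{-1}(M)$. For each fixed $N$ the admissible $Y_N$ form a nonempty affine subspace of the finite-dimensional space $\cL(G/N)$ (nonempty because the relevant maps $\cL(G/N)\to\cL(H/M)$ are surjective, being submersions again), and the bonding maps carry these affine subspaces onto one another. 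I would then obtain $Y$ as the projective limit of this system of nonempty affine subspaces.

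The crux, and the step I expect to be the main obstacle, is precisely this last assembly: the distinguished family $\{f^{-1}(M):M\in\cN(H)\}$ is \emph{not} cofinal in $\cN(G)$ (cofinality would force $\ker f=\{1\}$), so $Y_N$ is genuinely unconstrained for many $N$ and must be chosen coherently across a possibly uncountable, non-cofinal index set. What rescues the argument is finite-dimensionality: a projective limit of a surjective system of finite-dimensional (equivalently, weakly complete) vector spaces has surjective limit projections, because along any chain the codimensions of the nested affine constraint-spaces stabilize. Invoking this Mittag-Leffler-type surjectivity for weakly complete spaces yields a compatible family, hence the sought lift $Y$, and completes the proof.
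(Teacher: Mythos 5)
Your strategy is viable and genuinely different from the paper's, but as written it has real gaps at exactly the step you yourself call the crux. A small one first: the nonemptiness of each constraint set $A_N$ does not follow from surjectivity of the individual maps $\cL(G/N)\to\cL(H/M)$ alone, since $A_N$ is an intersection of infinitely many affine constraint sets; you need these to form a downward directed family, which requires $\cN(H)$ to be stable under finite intersections (true for pro-Lie groups: given $M_1,M_2\in\cN(H)$, the filter basis property yields $M_3\in\cN(H)$ inside $M_1\cap M_2$, and then $H/(M_1\cap M_2)$ is a quotient of the Lie group $H/M_3$ by a closed normal subgroup, hence a Lie group); only then does finite-dimensionality give a minimal, nonempty constraint set. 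Second, your assertion that the bonding maps carry $A_{N'}$ \emph{onto} $A_N$ is unjustified: for $N'\subseteq N$ the set $A_{N'}$ is cut out by strictly more constraints than the preimage of $A_N$ (all $M$ with $f(N')\subseteq M$, not merely those with $f(N)\subseteq M$), so this surjectivity is a fibre-product-surjectivity claim for a square of surjections, which fails for general such squares and would need a genuine argument here. Third, and most seriously, your justification of the final assembly (``along any chain the codimensions \dots stabilize'') is not a proof: $\cN(G)$ is in general an uncountable directed set with no countable cofinal subset, and over such index sets chain/Mittag-Leffler reasoning is insufficient; indeed a projective limit of nonempty sets with surjective bonding maps over a directed set can be empty (Waterhouse's ``empty inverse limit''). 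The true statement for weakly complete vector spaces is proved by duality or linear compactness, not by stabilization along chains.

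The good news is that the second and third gaps can be repaired together, and the unproven ``onto'' claim is not needed --- only ``into'' is, which follows from the compatibility of the family $(X_M)_M$. Work inside the weakly complete vector space $P=\prod_{N\in\cN(G)}\cL(G/N)$: the conditions $y_K\in A_K$ together with the thread conditions $\pi_{NN'}(y_{N'})=y_N$ define a family of closed affine subspaces of $P$ with the finite intersection property (for finitely many conditions, pick $N^*\in\cN(G)$ below all indices involved, choose $a\in A_{N^*}$, push it down, and fill the remaining coordinates arbitrarily from the $A_K$); linear compactness of weakly complete spaces --- every filter basis of nonempty closed affine subspaces has nonempty intersection --- then produces a thread, i.e.\ the desired $Y$. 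That linear-compactness/duality theory is developed in the appendix of \cite{HaM} and is the actual engine of your argument; it must be invoked as such. By contrast, the paper's proof avoids all projective-limit bookkeeping: via Lemmas 4.16--4.18 of \cite{HaM} it reduces to $H=\R$, and if the quotient morphism $f\colon G\to\R$ were not a retraction, then $\cL(f)=0$, so $f(G_0)=\{1\}$ (as $\exp_G(\cL(G))$ generates a dense subgroup of $G_0$), whence $f$ factors through the proto-discrete group $G/G_0$, forcing $\R$ to be proto-discrete and hence discrete, a contradiction. So your route can be completed, but it trades an eight-line argument for the full weight of the weakly complete vector space machinery.
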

\begin{proof}
We adapt an argument from \cite[p.\,193]{HaM}.
By Lemmas 4.16, 4.17 and 4.18 in \cite{HaM},
we may assume that $H=\R$ and have to show
that $f$ is a retraction. If $f$ was not a retraction,
then we would have $\cL(f)(\cL(G))=\{0\}$
and hence $f(G_0)=\{1\}$, using that
$\exp_G(\cL(G))$ generates a dense
subgroup of $G_0$
(by Lemma 3.24 and the proof of Lemma 3.22 in \cite{HaM}),
and $f\circ \exp_G=\exp_H\circ \, \cL(f)=1$.
Hence $f$ factors to a quotient morphism
$G/G_0\to \R$.
Since $G/G_0$ is proto-discrete
by \cite[Lemma 3.31]{HaM}, it would follow
that also its quotient $\R$ is proto-discrete
(see \cite[Proposition 3.30\,(b)]{HaM})
and hence discrete (as $\R$ has no small
subgroups). We have reached a contradiction.
\end{proof}
We mention that the Pro-Lie Group Theorem
has no analogue for projective limits
of Banach-Lie groups. In fact, consider
a Fr\'{e}chet space $E$
which is not a Banach space but admits
a continuous norm $\|.\|$ (e.g., $E=C^\infty([0,1],\R)$).
Then $E$ is a projective limit
of Banach spaces. The $\|.\|$-unit ball $U$ is a $0$-neighbourhood
in $E$ which does not contain any non-trivial subgroup
of~$E$.
If there existed a quotient morphism
$q\colon E\to G$ to a Banach-Lie group $G$
with kernel in $U$, then we would have $\ker(q)=\{0\}$.
Hence $q$ would be an isomorphism,
entailing that the Banach-Lie group~$G$
is abelian and simply connected and
therefore isomorphic to the additive
group of a Banach space. Since~$E$
is not a Banach space, we have reached a contradiction.
\noindent
{\footnotesize
{\bf Helge Gl\"{o}ckner}, TU~Darmstadt, FB~Mathematik~AG~AGF,
Schlossgartenstr.\,7, 64289 Darmstadt,\\
Germany.
E-Mail: {\tt gloeckner@mathematik.tu-darmstadt.de}}
\end{document}